\newtheorem{corollary}{Corollary}[section]
\newtheorem{theorem}[corollary]{Theorem}
\newtheorem{definition}[corollary]{Definition}
\newtheorem{remark}[corollary]{Remark}
\newtheorem{example}[corollary]{Example}
\newcommand{\ff}{\ensuremath{\mathbb{F}}}
\mathchardef\mhyphen="2D
\newcommand{\design}[2][t]{#1 \,\mhyphen\!\left(#2\right)}
\def\Z{\mathbb{Z}}
\begin{document}

\title{Mosaics of Combinatorial Designs }
\author[Oliver W.~Gnilke]{Oliver Wilhelm Gnilke}
\email{oliver.gnilke@aalto.fi or @gmail.com}
\address{Department of Mathematics and Systems Analysis, 
	Aalto University, P.O. Box 11100
	\hbox{FI-00076 Aalto,} Finland}

\author{Marcus Greferath}
\email{marcus.greferath@aalto.fi or @ucd.ie}
\address{Department of Mathematics and Systems Analysis, 
	Aalto University, P.O. Box 11100
	\hbox{FI-00076 Aalto,} Finland}

\author[Mario O.~Pav\v{c}evi\'{c}]{Mario Osvin Pav\v{c}evi\'{c}}
\email{mario.pavcevic@fer.hr}
\address{University of Zagreb, Faculty of electrical engineering
and computing, Department of applied mathematics, Unska~3,
\hbox{HR-10000 Zagreb,} Croatia} 

\thanks{The authors gratefully acknowledge support provided by the European Science Foundation in the context of COST Action IC1104: \emph{Random Network Coding and Designs over GF(q)}.}
\thanks{This work has been supported in part by the Croatian Science Foundation under the project 1637}

\date{March, 2015}

\begin{abstract}
Looking at incidence matrices of $t$-$(v,k,\lambda)$ designs as $v \times b$ matrices with $2$ possible entries, each of which indicates incidences of a $t$-design, we introduce the notion of a $c$-mosaic of designs, having the same number of points and blocks, as a matrix with $c$ different entries, such that each entry defines incidences of a design. In fact, a $v \times b$ matrix is decomposed in $c$ incidence matrices of designs, each denoted by a different colour, hence this decomposition might be seen as a tiling of a matrix with incidence matrices of designs as well. These mosaics have applications in experiment design when considering a simultaneous run of several different experiments. We have constructed infinite series of examples of mosaics and state some probably non-trivial open problems. Furthermore we extend our definition to the case of $q$-analogues of designs in a meaningful way. 
\end{abstract}
\keywords{$t$-design, $c$-mosaic, affine plane, resolvable design}
\subjclass{05B30, 05B05}

\maketitle

\section{Introduction}

A $t$-design with parameters $(v,k,\lambda)$ is a collection ${\mathcal B}$ of $k$-element subsets (blocks) of a $v$-element set $X$ (of points), such that every $t$-element subset of $X$ is contained in exactly $\lambda$ blocks. In such a case we speak sometimes of a $\design{v,k,\lambda}$ design.

It is known that $t$, $v$, $k$ and $\lambda$ must satisfy a number of more or less complicated (necessary) divisibility conditions whereas for $t\geq 2$ the existence of a $t$-design with parameters $(v,k,\lambda)$ is only known for particular cases which are not described by any general set of sufficient conditions.

A particularly convenient way to represent a design ${\mathcal B}$ is a binary $(v \times b)$-matrix, where $b$ is the number of blocks of the design and can be calculated as $b=\lambda \cdot {v \choose t} / {k \choose t}$. This matrix, also called the incidence matrix of ${\mathcal B}$, is labeled by the points in $X$ and the blocks of ${\mathcal B}$, and its entries give the value of the incidence, namely $1$ for incident and $0$ for non-incident. Each point of a $t$-design is incident with the same number of blocks, usually denoted by $r=\lambda \cdot {v-1 \choose t-1} / {k-1 \choose t-1}$.

The complement $\overline{\mathcal B}$ of a design $\mathcal B$ has the same point set $X$, while its blocks are complements of the blocks of {$\mathcal B$}. The complement of a $t$-design is a $t$-design as well. In particular, the complement of a $\design[2]{v,k,\lambda}$ design is a $\design[2]{v,v-k,b-2r+\lambda}$ design. Therefore, one can say that the entries equal to 1 in an incidence matrix indicate incidences of a design, while the 0's  indicate incidences of its complement.  

In the paper \cite{greftherk}, dealing primarily with $1$-designs, the notion of a coloured constant-composition design was thought of as a generalisation of this binary understanding of incidence in that the incidence matrix may be multi-valued (coloured) such that the set of subsets defined by every colour forms a design with its individual parameter set. A simple example for this was given in \cite[example 1.3]{greftherk} and looks as follows.

\begin{example} \label{ex1}
The matrix  $$\left[\begin{array}{ccccccc}
2 & 3 & 3 & 1 & 3 & 1 & 1\\
1 & 2 & 3 & 3 & 1 & 3 & 1\\
1 & 1 & 2 & 3 & 3 & 1 & 3\\
3 & 1 & 1 & 2 & 3 & 3 & 1\\
1 & 3 & 1 & 1 & 2 & 3 & 3\\
3 & 1 & 3 & 1 & 1 & 2 & 3\\
3 & 3 & 1 & 3 & 1 & 1 & 2
\end{array}\right]$$
is the incidence matrix of a $3$-coloured constant composition design in the sense of \cite{greftherk}. 
\end{example}

As a motivation for the work on the paper at hand, we observed that colours $1$ and $3$ each define a $2$-design with parameters $(7,3,1)$ whereas colour $2$ gives rise to the trivial $2$-design with parameters $(7,1,0)$. 

Indeed, all the $v \times b$ incidences between the points of $X$ and 
$b$ copies of $X$ itself can be partitioned in a mathematical constellation that might be best described by the expression:
$$\design[2]{7,3,1} \;\oplus\; \design[2]{7,3,1} \;\oplus\; \design[2]{7,1,0}.$$
It may get even clearer if we remark that the all-one matrix $J$ of dimension $v \times b$ can be written as a sum of the corresponding incidence matrices of mentioned $2$-designs.

As a matter of fact, we are able to immediately generalise this basic idea to any prime power $v\equiv 3\mod{4}$ using quadratic residues and non-residues in ${\rm GF}(v)$, forming developments of the Paley
difference sets in the additive groups of $GF(v)$, so that we clearly and constructively obtain $3$-valued incidence matrices describing decompositions of the form: 
$$\design[2]{v,\frac{v-1}{2},\frac{v-3}{4}} \;\oplus\; \design[2]{v,\frac{v-1}{2},\frac{v-3}{4}} \;\oplus\; \design[2]{v,1,0}.$$

Specifically, for $v=31$,  we have the example 
and we ask, if, particularly for $v=b=31$, there exist other decompositions of this kind.

Purely arithmetically, we may think of 
$$\design[2]{31,15,7} \;\oplus\; \design[2]{31,10,3} \;\oplus\; \design[2]{31,6,1},$$ however, so far, we have not been able to provide an example of a $3$-valued incidence matrix giving rise to this decomposition.

This paper is devoted to developing a formalisation of the general idea underlying these thoughts. To the best of our knowledge, there has not been any approach to this phenomenon in the past, and we hope to contribute to a new chapter in the theory of combinatorial designs and possibly also that of designs over ${\rm GF}(q)$.

\section{Main Definition and Necessary Conditions}

Let us recapitulate the main notion introduced in the previous section. We can take the 
all-one matrix of dimensions $v \times b$ and try to write it as a sum of incidence matrices of 
$t$-designs: 
$$
J = M_1 + M_2 + \cdots + M_c ,
$$
where $M_j$'s are incident matrices of designs ${\mathcal B}_j$. 
Since the complement of a design is a design again, this equation is equivalent to the
next one: 
$$
\overline{M}_i = M_1 + M_2 + \cdots + M_{i-1} + M_{i+1} + \cdots + M_c .
$$
If we look at the last expression structurally, we see that we have decomposed every block
of the design $\overline{{\mathcal B}}_i$ into $c-1$ differently coloured subblocks in the way that every set of $b$ subblocks of the same colour constitutes a design. This fact justifies the way we have written down our constellations: 
\begin{equation} \label{defmosaic}
\design{v,v,b} = \design[t_1]{v,k_1,\lambda_1} \oplus \design[t_2]{v,k_2,\lambda_2}
\oplus \cdots \oplus \design[t_c]{v,k_c,\lambda_c},
\end{equation}
or equivalently
\begin{multline} \label{defmoscompl}
\design[t_i]{v,\overline{k}_i,\overline{\lambda}_i} = \design[t_1]{v,k_1,\lambda_1} \oplus \design[t_2]{v,k_2,\lambda_2}
\oplus \cdots \oplus \design[t_{i-1}]{v,k_{i-1},\lambda_{i-1}} \\
\oplus \design[t_{i+1}]{v,k_{i+1},\lambda_{i+1}} \oplus \cdots \oplus \design[t_c]{v,k_c,\lambda_c} .
\end{multline}
We would like to point out that it is more natural and elegant to look at this decomposition on the level of incidence matrices. 

\begin{definition}
	Let $c$ be a positive integer and let $\mathcal B_i$ be designs with 
	parameters  $\design[t_i]{v,k_i,\lambda_i}$, $i=1,\dots,c$ with the same 
	number of points $v$ and blocks $b$. A $c$-mosaic of designs 
	${\mathcal B}_1, {\mathcal B}_2, \dots,{\mathcal B}_c$ is a $(v \times b)$ matrix 
	$M = \left[m_{pq} \right]$, $m_{pq} \in \{l_1, l_2,\dots,l_c\}$ for 
	which holds that matrices $M_i$ defined as 
	$$
	\left[ M_i\right]_{pq} = \left\lbrace 
	\begin{array}{cl}
	1, & m_{pq}=l_i \\
	0, & otherwise \\
	\end{array}
	\right.  
	$$
	are incidence matrices of designs ${\mathcal B}_i$.
\end{definition}
Since there are a lot of parameters to be mentioned, we find it most convenient to denote a $c$-mosaic of designs with given parameters in the way as written in equations (\ref{defmosaic}) and (\ref{defmoscompl}). 
\begin{remark}\label{necessaryconditions}
The prerequisites of this definition lead, presented more explicitely, to the following facts: 
\begin{enumerate}
	\item 
	The integrality conditions for each parameter set $t_i$-$(v,k_i,\lambda_i)$ are satisfied.
	\item
	$
	b = \lambda_1 \frac{{v \choose t_1}}{{k_1 \choose t_1}} = \lambda_2 \frac{{v \choose t_2}}{{k_2 \choose t_2}} = \cdots = \lambda_c \frac{{v \choose t_c}}{{k_c \choose t_c}}
	$
\end{enumerate}
\end{remark}
\begin{remark}\label{completemosaiccharacteristics}
	If $M$ is a $c$-mosaic of designs with parameters given above, then it holds: 
	\begin{enumerate}
		\item
		$k_1 + k_2 + \cdots + k_c = v$,
		\item
		$r_1 + r_2 + \cdots + r_c = b$.
	\end{enumerate}
\end{remark}
Conditions listed in the two remarks above form necessary conditions for the existence of a $c$-mosaic of designs. We are not aware of any further necessary conditions to be satisfied. 

The following equations follow immediately from the existence of a $c$-mosaic: 
\begin{eqnarray}
M_1 + M_2 + \cdots + M_c & = & J_{v,b} \\
l_1 M_1 + l_2 M_2 + \cdots + l_c M_c & = & M
\end{eqnarray} 

\begin{remark}
	Given $c$ disjoint designs, such that $k_1+ \cdots + k_c < v$ the remaining entries in the $v \times r$ incidence matrix form a (multiset) $1$-design. We sometimes refer to such mosaics as \emph{partial} mosaics, discarding the final trivial design.
\end{remark}
%
\begin{example}
\begin{enumerate}[\bf(a)]
\item
The matrix of order $v$
$$
M= \left[ 
\begin{array}{cccccc}
1 & 2 & 3 & \cdots & v-1 & v \\
v & 1 & 2 & \cdots & v-2 & v-1 \\
v-1 & v & 1 & \cdots & v-3 & v-2 \\
\vdots & \vdots & \vdots & \vdots & \ddots & \vdots \\
2 & 3 & 4 & \cdots & v & 1 
\end{array}
\right]
$$
is a trivial $v$-mosaic of designs with parameters $\design[2]{v,1,0}$. 
\item
Incidence matrices of a $t$-design and its complement, coloured in any two different colours, define a $2$-mosaic. 
\item
The previously mentioned decomposition consisting of two Hadamard $2$-designs on $v$ points and a the trivial $\design[2]{v,1,0}$ design define a $3$-mosaic.
\end{enumerate}
\end{example} 

\section{Further Constructions of Mosaics}

The next obvious question for our investigation was to find non-trivial mosaics of designs. In particular, we have concentrated on $2$-designs, and how they can be composed together in such a mosaic. Before presenting a general theorem, we will show, how we tackled this problem given the example of affine planes.

\begin{example}
\begin{enumerate}[{\bf (a)}]
\item The incidence matrix of an affine plane of order $3$, i.e.~the $2$-design with parameters $(9,3,1)$, is a $(9 \times 12)$-matrix. Clearly, $3$ copies of such planes satisfy the necessary conditions and we come up with the following $3$-mosaic.
$$\left[\begin{array}{c|ccc||c|ccc||c|ccc}
1 & 1 & 2 & 3 & 3 & 3 & 1 & 2 & 2 & 2 & 3 & 1\\
1 & 3 & 1 & 2 & 3 & 2 & 3 & 1 & 2 & 1 & 2 & 3\\
1 & 2 & 3 & 1 & 3 & 1 & 2 & 3 & 2 & 3 & 1 & 2\\
\hline\hline
2 & 1 & 2 & 3 & 1 & 2 & 3 & 1 & 3 & 3 & 1 & 2\\
2 & 3 & 1 & 2 & 1 & 1 & 2 & 3 & 3 & 2 & 3 & 1\\
2 & 2 & 3 & 1 & 1 & 3 & 1 & 2 & 3 & 1 & 2 & 3\\
\hline\hline
3 & 1 & 2 & 3 & 2 & 1 & 2 & 3 & 1 & 1 & 2 & 3\\
3 & 3 & 1 & 2 & 2 & 3 & 1 & 2 & 1 & 3 & 1 & 2\\
3 & 2 & 3 & 1 & 2 & 2 & 3 & 1 & 1 & 2 & 3 & 1\\
\end{array}\right]$$
In our notation, this is a $3$-mosaic given by
$$
\design[2]{9,9,12}=\design[2]{9,3,1} \oplus \design[2]{9,3,1} \oplus \design[2]{9,3,1} .
$$
\item Being able to follow the same pattern, here is a $(16 \times 20)$-matrix representing a $4$-mosaic of $4$ copies of the affine plane of order $4$.
$$
\left[
\begin{array}{c|cccc||c|cccc||c|cccc||c|cccc}
0 & \bf 0 & \bf 1 & \bf 2 & \bf 3 & 1 & \bf 0 & 1 & 2 & 3 & 2 & \bf 0 & 1 & 2 & 3 & 3 & \bf 0 & 1 & 2 & 3 \\
0 & \bf 1 & \bf 0 & \bf 3 & \bf 2 & 1 & 1 & 0 & 3 & 2 & 2 & 1 & 0 & 3 & 2 & 3 & 1 & 0 & 3 & 2 \\
0 & \bf 2 & \bf 3 & \bf 0 & \bf 1 & 1 & 2 & 3 & 0 & 1 & 2 & 2 & 3 & 0 & 1 & 3 & 2 & 3 & 0 & 1 \\
0 & \bf 3 & \bf 2 & \bf 1 & \bf 0 & 1 & 3 & 2 & 1 & 0 & 2 & 3 & 2 & 1 & 0 & 3 & 3 & 2 & 1 & 0 \\ \hline\hline
1 & \bf 0 & 1 & 2 & 3 & 0 & \bf 1 & 0 & 3 & 2 & 3 & \bf 2 & 3 & 0 & 1 & 2 & \bf 3 & 2 & 1 & 0 \\
1 & 1 & 0 & 3 & 2 & 0 & 0 & 1 & 2 & 3 & 3 & 3 & 2 & 1 & 0 & 2 & 2 & 3 & 0 & 1 \\
1 & 2 & 3 & 0 & 1 & 0 & 3 & 2 & 1 & 0 & 3 & 0 & 1 & 2 & 3 & 2 & 1 & 0 & 3 & 2 \\
1 & 3 & 2 & 1 & 0 & 0 & 2 & 3 & 0 & 1 & 3 & 1 & 0 & 3 & 2 & 2 & 0 & 1 & 2 & 3 \\ \hline\hline
2 & \bf 0 & 1 & 2 & 3 & 3 & \bf 2 & 3 & 0 & 1 & 0 & \bf 3 & 2 & 1 & 0 & 1 & \bf 1 & 0 & 3 & 2 \\
2 & 1 & 0 & 3 & 2 & 3 & 3 & 2 & 1 & 0 & 0 & 2 & 3 & 0 & 1 & 1 & 0 & 1 & 2 & 3 \\
2 & 2 & 3 & 0 & 1 & 3 & 0 & 1 & 2 & 3 & 0 & 1 & 0 & 3 & 2 & 1 & 3 & 2 & 1 & 0 \\
2 & 3 & 2 & 1 & 0 & 3 & 1 & 0 & 3 & 2 & 0 & 0 & 1 & 2 & 3 & 1 & 2 & 3 & 0 & 1 \\ \hline\hline
3 & \bf 0 & 1 & 2 & 3 & 2 & \bf 3 & 2 & 1 & 0 & 1 & \bf 1 & 0 & 3 & 2 & 0 & \bf 2 & 3 & 0 & 1 \\
3 & 1 & 0 & 3 & 2 & 2 & 2 & 3 & 0 & 1 & 1 & 0 & 1 & 2 & 3 & 0 & 3 & 2 & 1 & 0 \\
3 & 2 & 3 & 0 & 1 & 2 & 1 & 0 & 3 & 2 & 1 & 3 & 2 & 1 & 0 & 0 & 0 & 1 & 2 & 3 \\
3 & 3 & 2 & 1 & 0 & 2 & 0 & 1 & 2 & 3 & 1 & 2 & 3 & 0 & 1 & 0 & 1 & 0 & 3 & 2 \\
\end{array}
\right]
$$
\end{enumerate}
\end{example}

The bold entries in part {\bf (b)} of the previous example shows how  ${\rm GF}(4)$ 
can be used in order to describe the decomposition.
Let 
$$
A:=\left[
\begin{tabular}{cccc}
0 & 1 & 2 & 3 \\
1 & 0 & 3 & 2 \\
2 & 3 & 0 & 1 \\
3 & 2 & 1 & 0 \\
\end{tabular}
\right]\;\;
\mbox{and} \;\;
M:=\left[
\begin{tabular}{cccc}
0 & 0 & 0 & 0 \\
0 & 1 & 2 & 3 \\
0 & 2 & 3 & 1 \\
0 & 3 & 1 & 2 \\
\end{tabular}
\right]
$$
denote the addition and multiplication table of ${\rm GF}(4)$. Then, up to column rearrangements, the above matrix can be written as: {\footnotesize
$$
\left[
\begin{array}{cccc|cccc}
m_{0,0} + A & m_{0,1} + A & m_{0,2} + A & m_{0,3} + A & a_{0,0} J_{4,1}  &  a_{0,1} J_{4,1} &  a_{0,2}J_{4,1} & a_{0,3} J_{4,1}  \\[1mm]
m_{1,0} + A & m_{1,1} + A & m_{1,2} + A & m_{1,3} + A & a_{1,0} J_{4,1} & a_{1,1}  J_{4,1}  &  a_{1,2}J_{4,1} & a_{1,3} J_{4,1} \\[1mm]
m_{2,0} + A & m_{2,1} + A & m_{2,2} + A & m_{2,3} + A & a_{2,0}J_{4,1}  & a_{2,1} J_{4,1}  & a_{2,2} J_{4,1}  & a_{2,3} J_{4,1} \\[1mm]
m_{3,0} + A & m_{3,1} + A & m_{3,2} + A & m_{3,3} + A & a_{3,0} J_{4,1} & a_{3,1} J_{4,1}  & a_{3,2}J_{4,1}  &  a_{3,3} J_{4,1}
\end{array}
\right]
$$}

where $J_{4,1}$ denotes the all-$1$-column of length $4$. In fact, it is clear that this mosaic allows a "chained" tactical decomposition (see \cite{tact}) of all copies of affine planes involved in this construction.

\begin{theorem}
Let $F$ be the field with $q$ elements. Then there is a $q$-mosaic of affine planes of order $q$:
$$
\design[2]{q^2,q^2,q^2+q} = \design[2]{q^2,q,1} \oplus \cdots \oplus \design[2]{q^2,q,1} .
$$
\end{theorem}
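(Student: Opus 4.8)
The plan is to exhibit a single matrix together with a coordinatisation in which all $q$ affine planes appear simultaneously, one per colour, read off as the fibres of a family of affine maps. I would coordinatise the affine plane $AG(2,q)$ in the classical way: take the point set to be $F^2$, the $q^2$ non-vertical lines to be the sets $\{(x,y):y=sx+t\}$ with $s,t\in F$, and the $q$ vertical lines to be the sets $\{(x,y):x=c\}$ with $c\in F$; this is a $\design[2]{q^2,q,1}$ design. Indexing the $q^2$ rows of the mosaic by the points $(x,y)\in F^2$, I would use the field to assign colours from $F$ to a $(q^2\times(q^2+q))$-matrix $M$ whose columns split into $q^2$ ``affine'' columns indexed by $(j,b)\in F^2$ and $q$ ``vertical'' columns indexed by $j\in F$, setting
$$
M_{(x,y),(j,b)} = y + jx + b \qquad\text{and}\qquad M_{(x,y),j} = x + j ,
$$
with all arithmetic performed in $F$.

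The key observation is that the colour assigned to a point in a fixed column is simply the value at that point of an affine map $F^2\to F$, namely $(x,y)\mapsto y+jx+b$ for an affine column and $(x,y)\mapsto x+j$ for a vertical column. Since each such map has non-constant linear part, it is surjective with $q$ fibres, each of size $q$, and these fibres are exactly the $q$ parallel lines of one direction (slope $-j$, respectively the vertical direction). Hence within every single column the $q$ colour classes partition the $q^2$ points into a parallel class, which is precisely the tiling condition $M_1+\cdots+M_q = J_{q^2,q^2+q}$ underlying a mosaic.

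It then remains to check that each individual colour yields a full affine plane. Fixing a colour $\ell\in F$, the affine column $(j,b)$ contributes the fibre $\{(x,y):y=-jx+(\ell-b)\}$, and letting $(j,b)$ range over $F^2$ produces every non-vertical line exactly once, since $(j,b)\mapsto(-j,\ell-b)$ is a bijection onto slope--intercept space; likewise the vertical column $j$ contributes $\{x=\ell-j\}$, and as $j$ ranges over $F$ these give all $q$ vertical lines exactly once. Thus the colour-$\ell$ class is precisely the full line set of $AG(2,q)$, so each $M_\ell$ is the incidence matrix of a $\design[2]{q^2,q,1}$ design, and summing over the $q$ colours yields the claimed decomposition
$$
\design[2]{q^2,q^2,q^2+q} = \design[2]{q^2,q,1} \oplus \cdots \oplus \design[2]{q^2,q,1} .
$$

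I expect the only genuine difficulty to lie in choosing the colouring forms so that the two requirements hold at once --- that each colour class be a complete affine plane and that the colours partition every column --- rather than in any of the subsequent verifications, which reduce to the elementary fact that a non-constant affine map $F^2\to F$ has equal-sized fibres forming a parallel class of $AG(2,q)$. As a sanity check one should also confirm the numerical conditions of Remarks~\ref{necessaryconditions} and~\ref{completemosaiccharacteristics}, namely $q\cdot q = q^2$ for the block sizes and $q\cdot(q+1)=q^2+q$ for the replication numbers.
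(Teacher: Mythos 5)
Your construction is exactly the paper's: the matrix with entries $y+jx+b$ on the affine columns and $x+j$ on the vertical columns is precisely the paper's Kronecker-product formula $\left[ M \otimes J_{q,q} + J_{q,q}\otimes A \mid A \otimes J_{q,1}\right]$ written out in coordinates, so this is the same approach, and your verification (identifying each colour class with the full line set of $AG(2,q)$ via the bijection $(j,b)\mapsto(-j,\ell-b)$) is correct --- indeed somewhat more explicit than the paper's counting argument.
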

\begin{proof} Let $A$ and $M$ denote the operation tables (matrices) of the additive and multiplicative groups of $F$ respectively.  Then the desired $q$-mosaic is described and represented as a matrix of dimensions $q^2 \times (q^2+q)$ by the simple formula: 
$$
\left[ M \otimes J_{q,q} + J_{q,q}\otimes A \mid A \otimes J_{q,1}\right].
$$
Here $\otimes$ denotes the Kronecker product of matrices, and $J_{q,q}$ is the all-$1$ matrix of size $q\times q$. Now it is an easy task to verify that each element of $F$ (representing here a colour) appears in each row $q+1$ times and in each column $q$ times and that it appears in two different rows in the same columns just once at the same time.
\end{proof}
\subsection{Mosaics of Resolvable Designs}

We present a general construction for creating $\frac{v}{k}$-mosaics out of identical copies of a resolvable $\design{v,k,\lambda}$ design, i.e. 
\[ \design{v,v,b}=\bigoplus_{i=1}^{\frac{v}{k}} \design{v,k,\lambda}. \]

\begin{definition}
 A design $\mathcal{B}$ on a set $X$ is called resolvable if there exists a partition of the set of blocks $\mathcal{B}$ into so called parallel classes, such that every parallel class itself is a partition of the set of points $X$.
\end{definition}

We note that for a resolvable $\design{v, k, \lambda}$ design the number of parallel classes is $r:=\lambda_1$ and each class contains $\frac{v}{k}$ blocks.\\
Among the many examples of resolvable designs, affine planes and Kirkman triple systems are the most prominent ones.

\begin{theorem}
Let $D$ be the incidence matrix of a resolvable $\design{v,k,\lambda}$ design, where the columns have been arranged by parallel classes. Let $L$ be a latin square of order $\frac{v}{k}$ with entries $l_1, \dots, l_\frac{v}{k}$. Then $M:= D (I_r \otimes L)$ is a $\frac{v}{k}$-mosaic.
\end{theorem}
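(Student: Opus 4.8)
The plan is to compute the entries of $M$ directly, then read off the colour classes $M_i$ and show that each one reproduces the block set of the original design.

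First I would index the columns of $D$ by pairs $(s,j)$, where $s\in\{1,\dots,r\}$ labels the parallel class and $j\in\{1,\dots,n\}$ (with $n:=\tfrac{v}{k}$) labels the block within that class. Since $I_r\otimes L$ is block diagonal with $r$ copies of $L$ on the diagonal, the entry of $M$ in row $p$, column $(s',j')$ is $\sum_{j} D_{p,(s',j)}\,L_{j,j'}$. The crucial point is resolvability: within the parallel class $s'$ the point $p$ lies in exactly one block, say block $j(p,s')$, so exactly one of the factors $D_{p,(s',j)}$ equals $1$ and the rest vanish. Hence the inner sum collapses to a single symbol, $M_{p,(s',j')}=L_{j(p,s'),j'}$; in particular no genuine addition of colours ever occurs, which is what makes the formal product $D(I_r\otimes L)$ meaningful over a colour alphabet.

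Next, fix a colour $l_i$ and consider the $0$-$1$ matrix $M_i$. From the formula, $(M_i)_{p,(s',j')}=1$ exactly when $L_{j(p,s'),j'}=l_i$. Because $L$ is a Latin square, in column $j'$ the symbol $l_i$ occurs in a unique row $a=a(i,j')$; thus the entry is $1$ precisely when $j(p,s')=a(i,j')$, i.e. precisely when $p$ lies in block $(s',a(i,j'))$ of $D$. Therefore column $(s',j')$ of $M_i$ is literally column $(s',a(i,j'))$ of $D$: every column of $M_i$ is a block of the original design.

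It remains to show that, colour by colour, the full block multiset is recovered. Fix a parallel class $s'$. Since each symbol occurs exactly once in each column of $L$ the row $a(i,j')$ is well defined for every $j'$, and since each symbol also occurs exactly once in each row the assignment $j'\mapsto a(i,j')$ is a bijection of $\{1,\dots,n\}$. Hence the columns of $M_i$ lying in class $s'$ are exactly the $n$ blocks of class $s'$ in $D$, merely reordered; running over all $s'$ shows that $M_i$ and $D$ share the same multiset of columns, so $M_i$ is an incidence matrix of the design $\mathcal B$, i.e. of a $\design{v,k,\lambda}$ design. Finally, for each entry the symbol $L_{j(p,s'),j'}$ is a single well-defined colour, so $\sum_{i=1}^{n}M_i=J_{v,b}$, and $M$ meets the defining requirement of a $\tfrac{v}{k}$-mosaic built from $\tfrac{v}{k}$ copies of $\mathcal B$. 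The main obstacle is not a hard computation but correct bookkeeping: one must first justify that the product is meaningful over a colour alphabet by noting that resolvability collapses the inner sum to a single symbol, and then deploy the Latin-square property in both directions to prove that $j'\mapsto a(i,j')$ is a bijection within each parallel class. The decisive point is that this forces the block multiset of each $M_i$ to coincide \emph{exactly} with that of $D$, rather than merely to share its parameters; that is what guarantees $M_i$ is a genuine copy of $\mathcal B$.
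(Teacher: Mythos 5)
Your proof is correct and follows essentially the same route as the paper's: both exploit the block-diagonal structure of $I_r\otimes L$, use resolvability so that each entry of the product is a single symbol, and then apply the Latin-square property in the column direction (each column of $M$ carries one full parallel class in distinct colours) and in the row direction (each colour class recovers every block of $D$). Your write-up is simply a more explicit, index-level formalization of the argument the paper gives in prose.
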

\begin{proof}
We see that every $v \times \frac{v}{k}$ submatrix containing one full parallel class is multiplied by a copy of the latin square $L$. Since every value $l_1, \dots, l_\frac{v}{k}$ appears exactly once in every column of $L$, every column of $M$ contains every block of a parallel class, each block multiplied by a different $l_i$. Since these columns form a parallel class they completely partition the set of points while having disjoint support, therefore completely covering the column without overlapping each other.\\
Furthermore every $l_i$ appears exactly once in every row of $(I_r \otimes L)$, therefore every block of the original design is contained in every design of the mosaic, which shows that each design is an exact copy of the original resolvable design.
\end{proof}

\begin{example}
A $\design[2]{15,3,1}$ resolvable design arranged by parallel classes.

\resizebox{\textwidth}{!}{
$ \arraycolsep=1.4pt 
D:=\left[ \begin{array}{ccccc|ccccc|ccccc|ccccc|ccccc|ccccc|ccccc}
1& 0& 0& 0& 0& 1& 0& 0& 0& 0& 0& 0& 0& 0& 1& 0& 0& 1& 0& 0& 0& 0& 0& 1& 0& 0& 0& 0& 0& 1& 0& 0& 0& 1& 0\\0& 1& 0& 0& 0& 1& 0& 0& 0& 0& 1& 0& 0& 0& 0& 0& 0& 0& 0& 1& 0& 0& 0& 0& 1& 0& 0& 1& 0& 0& 0& 0& 1& 0& 0\\0& 0& 1& 0& 0& 0& 1& 0& 0& 0& 1& 0& 0& 0& 0& 0& 0& 0& 1& 0& 1& 0& 0& 0& 0& 0& 0& 0& 1& 0& 0& 0& 0& 1& 0\\0& 0& 0& 1& 0& 0& 1& 0& 0& 0& 0& 1& 0& 0& 0& 0& 0& 0& 0& 1& 0& 1& 0& 0& 0& 0& 0& 0& 0& 1& 1& 0& 0& 0& 0\\0& 0& 0& 0& 1& 1& 0& 0& 0& 0& 0& 1& 0& 0& 0& 1& 0& 0& 0& 0& 1& 0& 0& 0& 0& 1& 0& 0& 0& 0& 0& 1& 0& 0& 0\\1& 0& 0& 0& 0& 0& 0& 0& 0& 1& 1& 0& 0& 0& 0& 1& 0& 0& 0& 0& 0& 1& 0& 0& 0& 0& 1& 0& 0& 0& 0& 0& 0& 0& 1\\0& 1& 0& 0& 0& 0& 1& 0& 0& 0& 0& 0& 0& 0& 1& 0& 1& 0& 0& 0& 0& 0& 1& 0& 0& 1& 0& 0& 0& 0& 0& 0& 0& 0& 1\\0& 0& 1& 0& 0& 0& 0& 1& 0& 0& 0& 1& 0& 0& 0& 0& 1& 0& 0& 0& 0& 0& 0& 1& 0& 0& 1& 0& 0& 0& 0& 0& 1& 0& 0\\0& 0& 0& 1& 0& 0& 0& 1& 0& 0& 0& 0& 1& 0& 0& 1& 0& 0& 0& 0& 0& 0& 1& 0& 0& 0& 0& 1& 0& 0& 0& 0& 0& 1& 0\\0& 0& 0& 0& 1& 0& 0& 0& 1& 0& 0& 0& 1& 0& 0& 0& 0& 0& 0& 1& 0& 0& 0& 1& 0& 0& 0& 0& 1& 0& 0& 0& 0& 0& 1\\1& 0& 0& 0& 0& 0& 0& 0& 1& 0& 0& 0& 0& 1& 0& 0& 1& 0& 0& 0& 1& 0& 0& 0& 0& 0& 0& 1& 0& 0& 1& 0& 0& 0& 0\\0& 1& 0& 0& 0& 0& 0& 1& 0& 0& 0& 0& 0& 1& 0& 0& 0& 1& 0& 0& 0& 1& 0& 0& 0& 0& 0& 0& 1& 0& 0& 1& 0& 0& 0\\0& 0& 1& 0& 0& 0& 0& 0& 0& 1& 0& 0& 1& 0& 0& 0& 0& 1& 0& 0& 0& 0& 0& 0& 1& 1& 0& 0& 0& 0& 1& 0& 0& 0& 0\\0& 0& 0& 1& 0& 0& 0& 0& 1& 0& 0& 0& 0& 0& 1& 0& 0& 0& 1& 0& 0& 0& 0& 0& 1& 0& 1& 0& 0& 0& 0& 1& 0& 0& 0\\0& 0& 0& 0& 1& 0& 0& 0& 0& 1& 0& 0& 0& 1& 0& 0& 0& 0& 1& 0& 0& 0& 1& 0& 0& 0& 0& 0& 0& 1& 0& 0& 1& 0& 0
\end{array}\right]$}\\

We choose a simple cyclic shift for the latin square
$$ L:=\begin{bmatrix}
1 &5 &4 &3 &2\\
2 &1 &5 &4 &3\\
3 &2 &1 &5 &4\\
4 &3 &2 &1 &5\\
5 &4 &3 &2 &1\\
\end{bmatrix}$$
and calculate the incidence matrix of a mosaic as $M = D (I_7 \otimes L)$.\\

\resizebox{\textwidth}{!}{
$ \arraycolsep=1.4pt 
M:=\left[ \begin{array}{ccccc|ccccc|ccccc|ccccc|ccccc|ccccc|ccccc}
1& 5& 4& 3& 2& 1& 5& 4& 3& 2& 5& 4& 3& 2& 1& 3& 2& 1& 5& 4& 4& 3& 2& 1& 5& 5& 4& 3& 2& 1& 4& 3& 2& 1& 5\\2& 1& 5& 4& 3& 1& 5& 4& 3& 2& 1& 5& 4& 3& 2& 5& 4& 3& 2& 1& 5& 4& 3& 2& 1& 3& 2& 1& 5& 4& 3& 2& 1& 5& 4\\3& 2& 1& 5& 4& 2& 1& 5& 4& 3& 1& 5& 4& 3& 2& 4& 3& 2& 1& 5& 1& 5& 4& 3& 2& 4& 3& 2& 1& 5& 4& 3& 2& 1& 5\\4& 3& 2& 1& 5& 2& 1& 5& 4& 3& 2& 1& 5& 4& 3& 5& 4& 3& 2& 1& 2& 1& 5& 4& 3& 5& 4& 3& 2& 1& 1& 5& 4& 3& 2\\5& 4& 3& 2& 1& 1& 5& 4& 3& 2& 2& 1& 5& 4& 3& 1& 5& 4& 3& 2& 1& 5& 4& 3& 2& 1& 5& 4& 3& 2& 2& 1& 5& 4& 3\\1& 5& 4& 3& 2& 5& 4& 3& 2& 1& 1& 5& 4& 3& 2& 1& 5& 4& 3& 2& 2& 1& 5& 4& 3& 2& 1& 5& 4& 3& 5& 4& 3& 2& 1\\2& 1& 5& 4& 3& 2& 1& 5& 4& 3& 5& 4& 3& 2& 1& 2& 1& 5& 4& 3& 3& 2& 1& 5& 4& 1& 5& 4& 3& 2& 5& 4& 3& 2& 1\\3& 2& 1& 5& 4& 3& 2& 1& 5& 4& 2& 1& 5& 4& 3& 2& 1& 5& 4& 3& 4& 3& 2& 1& 5& 2& 1& 5& 4& 3& 3& 2& 1& 5& 4\\4& 3& 2& 1& 5& 3& 2& 1& 5& 4& 3& 2& 1& 5& 4& 1& 5& 4& 3& 2& 3& 2& 1& 5& 4& 3& 2& 1& 5& 4& 4& 3& 2& 1& 5\\5& 4& 3& 2& 1& 4& 3& 2& 1& 5& 3& 2& 1& 5& 4& 5& 4& 3& 2& 1& 4& 3& 2& 1& 5& 4& 3& 2& 1& 5& 5& 4& 3& 2& 1\\1& 5& 4& 3& 2& 4& 3& 2& 1& 5& 4& 3& 2& 1& 5& 2& 1& 5& 4& 3& 1& 5& 4& 3& 2& 3& 2& 1& 5& 4& 1& 5& 4& 3& 2\\2& 1& 5& 4& 3& 3& 2& 1& 5& 4& 4& 3& 2& 1& 5& 3& 2& 1& 5& 4& 2& 1& 5& 4& 3& 4& 3& 2& 1& 5& 2& 1& 5& 4& 3\\3& 2& 1& 5& 4& 5& 4& 3& 2& 1& 3& 2& 1& 5& 4& 3& 2& 1& 5& 4& 5& 4& 3& 2& 1& 1& 5& 4& 3& 2& 1& 5& 4& 3& 2\\4& 3& 2& 1& 5& 4& 3& 2& 1& 5& 5& 4& 3& 2& 1& 4& 3& 2& 1& 5& 5& 4& 3& 2& 1& 2& 1& 5& 4& 3& 2& 1& 5& 4& 3\\5& 4& 3& 2& 1& 5& 4& 3& 2& 1& 4& 3& 2& 1& 5& 4& 3& 2& 1& 5& 3& 2& 1& 5& 4& 5& 4& 3& 2& 1& 3& 2& 1& 5& 4
\end{array}\right]$}
\end{example}
The earlier examples on affine planes can be constructed in this fashion as well.

\section{$q$-analogues of Mosaics}

To define mosaics of $q$-designs a $q$-analogue of the disjoint property of blocks is necessary. It seems most natural to consider a set of blocks to be 'disjoint' if they are linearly independent, i.e.
\[ \dim\left( \sum_i B_i \right)=\sum_i\dim(B_i) \]
or, in other words, their sum is direct. 

\begin{definition}
	Let $c$ be a positive integer and let $\mathcal{B}_i$ be $q$-designs with parameters $\design[t_i]{v,k_i,\lambda_i;q}$, $1 \leq i \leq c$, with the same number of blocks $b$. We denote by $B_i^j$, $1 \leq j \leq b$, the different blocks of design $\mathcal{B}_i$. We say the designs $\mathcal{B}_i$ form a $c$-mosaic if 
		\[\sum_i k_i= \dim\left( \sum_i B_i^j \right)= v, \;\; \text{ for all } \; j.\]
\end{definition}

\begin{remark}
	If we have designs such that $\sum_i k_i= \dim\left( \sum_i B_i^j \right)$ for all $j,$ we speak of a \emph{partial} mosaic of $q$-designs.
\end{remark}
\begin{remark}
	As in remark \ref{necessaryconditions} the following necessary conditions for a (partial) mosaic of $q$-designs follow from the existence of the involved $q$-designs:
	\begin{enumerate}
		\item The integrality conditions for each parameter set $\design[t_i]{v,k_i,\lambda_i;q}$ are satisfied
		\item 	$
		b = \lambda_1 \frac{{v \brack t_1}_q}{{k_1 \brack t_1}_q} = \lambda_2 \frac{{v \brack t_2}_q}{{k_2 \brack t_2}_q} = \cdots = \lambda_c \frac{{v \brack t_c}_q}{{k_c \brack t_c}_q}
		$
	\end{enumerate}
\end{remark}

\begin{remark}
	Similar to remark \ref{completemosaiccharacteristics} for a $c$-mosaic of $q$-designs as described above the following equations holds:
	\begin{enumerate}
		\item $k_1+k_2+ \cdots +k_c=v$
		\item $b (q^{k_i}-1) = (q^v-1) r_i$.
	\end{enumerate}
	We can put these equations together and see that
	\[q^v=\prod_i \left(\frac{q^v-1}{b}r_i+1\right)\]
\end{remark}

\begin{example}
It is well known that the Fano plane not only describes a $\design[2]{7,3,1}$ design but can also be seen as a $\design[2]{3,2,1;2}$ design over $\mathbb{F}_2^3$.
We can add a trivial $\design[1]{3,1,1;2}$ design to form a $2$-mosaic. See \ref{ex1} and consider only the colors $1$ and $2$.
\end{example}

\begin{example}
		In \cite{Braun} the authors construct the first known $\design[2]{13,3,1;2}$ designs. They use the well known isomorphisms $\ff_2^{13}\cong\ff_{2^{13}}$ and $\ff_{2^{13}}^* \cong \Z_{2^{13}-1}$  where the latter is defined through a generator $\alpha \in \ff_{2^{13}}^*$ with minimal polynomial $x^{13}+x^{12}+x^{10}+x^{9}+1$. The design is described as the union of $15$ orbits under the action of the group $A_\alpha:= Gal(\ff_{2^{13}} / \ff_2) \ltimes C_\alpha$, where $C_\alpha$ is the group generated by $M_\alpha$ the multiplication with $\alpha$. The following list contains one representative from each orbit, described by the exponents of the non-zero elements in the vector space.		
	\begin{align*}
	&V_1:=[0, 1, 1249, 5040, 7258, 7978, 8105],
	&&V_2:=[0, 7, 1857, 6681, 7259, 7381, 7908],\\
	&V_3:=[0, 9, 1144, 1945, 6771, 7714, 8102], 
	&&V_4:=[0, 11, 209, 1941, 2926, 3565, 6579],\\
	&V_5:=[0, 12, 2181, 2519, 3696, 6673, 6965],
	&&V_6:=[0, 13, 4821, 5178, 7823, 8052, 8110],\\
	&V_7:=[0, 17, 291, 1199, 5132, 6266, 8057], 
	&&V_8:=[0, 20, 1075, 3939, 3996, 4776, 7313],\\
	&V_9:=[0, 21, 2900, 4226, 4915, 6087, 8008],
	&&V_{10}:=[0, 27, 1190, 3572, 4989, 5199, 6710],\\
	&V_{11}:=[0, 30, 141, 682, 2024, 6256, 6406],
	&&V_{12}:=[0, 31, 814, 1161, 1243, 4434, 6254],\\
	&V_{13}:=[0, 37, 258, 2093, 4703, 5396, 6469], 
	&&V_{14}:=[0, 115, 949, 1272, 1580, 4539, 4873],\\
	&V_{15}:=[0, 119, 490, 5941, 6670, 6812, 7312]
\end{align*}
It is possible to arrange four copies of this design into a partial $4$-mosaic. We found that the first vector space and its images under multiplication with $\alpha^2, \alpha^4,$ and $\alpha^8$ form a direct sum.
\[ V_1 \oplus M_\alpha^2 V_1 \oplus M_\alpha^4 V_1 \oplus M_\alpha^8 V_1. \]
Applying the group action to these four summands gives us the first $106483$ blocks of our mosaic. Similarly computer search finds that the following are direct sums
\begin{align*}
& V_1 \oplus M_\alpha^2 V_1 \oplus M_\alpha^4 V_1 \oplus M_\alpha^8 V_1, &&V_2 \oplus M_\alpha^1 V_2 \oplus M_\alpha^4 V_2 \oplus M_\alpha^{12} V_2 \\
& V_3 \oplus M_\alpha^1 V_3 \oplus M_\alpha^2 V_3 \oplus M_\alpha^3 V_3, &&V_4 \oplus M_\alpha^1 V_4 \oplus M_\alpha^2 V_4 \oplus M_\alpha^3 V_4 \\
& V_5 \oplus M_\alpha^1 V_5 \oplus M_\alpha^2 V_5 \oplus M_\alpha^3 V_5, &&V_6 \oplus M_\alpha^1 V_6 \oplus M_\alpha^2 V_6 \oplus M_\alpha^3 V_6 \\
&V_7 \oplus M_\alpha^1 V_7 \oplus M_\alpha^2 V_7 \oplus M_\alpha^3 V_7, &&V_8 \oplus M_\alpha^1 V_8 \oplus M_\alpha^2 V_8 \oplus M_\alpha^3 V_8 \\
&V_9 \oplus M_\alpha^1 V_9 \oplus M_\alpha^2 V_9 \oplus M_\alpha^3 V_9, &&V_{10} \oplus M_\alpha^1 V_{10} \oplus M_\alpha^2 V_{10} \oplus M_\alpha^3 V_{10} \\
&V_{11} \oplus M_\alpha^1 V_{11} \oplus M_\alpha^2 V_{11} \oplus M_\alpha^3 V_{11}, &&V_{12} \oplus M_\alpha^1 V_{12} \oplus M_\alpha^2 V_{12} \oplus M_\alpha^4 V_{12} \\
&V_{13} \oplus M_\alpha^1 V_{13} \oplus M_\alpha^2 V_{13} \oplus M_\alpha^4 V_{13}, &&V_{14} \oplus M_\alpha^1 V_{14} \oplus M_\alpha^2 V_{14} \oplus M_\alpha^3 V_{14} \\
&V_{15} \oplus M_\alpha^1 V_{15} \oplus M_\alpha^2 V_{15} \oplus M_\alpha^4 V_{15}\\
\end{align*}
We therefore have a partial mosaic of four copies of the $\design[2]{13,3,1;2}$ design. We can complete the mosaic with a trivial multi $\design[1]{13,1,195;2}$ design containing $195$ copies of each one dimensional vector space. This is achieved by completing each of the above direct sums with a fifth summand of dimension one and having the group $C_\alpha$ act on it. The group acts transitively on the one dimensional vector spaces with stabilizer of size $13$. Therefore every one dimensional vector space is repeated $13$ times within each of the $15$ orbits and therefore $195$ times in total.
\end{example}

\section{Applications of Mosaics}
Combinatorial design theory has its roots in the design of experiments \cite{fisher}, arranging test subjects in suitable test groups, or arranging players in a tournament. Assume $v$ players and a game that needs exactly $k$ participants, then using a $\design{v,k,\lambda}$ design ensures that when playing $b$ games every subset of $t$ subjects play together exactly $\lambda$ times. Resolvable designs have been very useful, since they allow for a parallelization of test runs that are in the same parallel class.\\
With games and experiments it is quite often the case that a parallelization in this manner is not possible, either because only one gameboard is at hand or it is required for the test to be supervised by the same person to ensure consistency. In this case a mosaic can be used to parallelize several different games/tests, each with its own required number of players $k_i$, on the same set of subjects. Assume a mosaic of the form
\[ \design[t_1]{v,k_1, \lambda_1} \oplus \cdots \oplus \design[t_c]{v,k_c, \lambda_c}  \]
then every column of the mosaic partitions the different players onto the different games, while preserving the properties of the designs used in the mosaic.\\

A different application of mosaics lies in media access control. Mosaics of $2$-designs generated from disjoint difference sets have been shown to provide significant improvements in rendevouz probability when channel hopping is used while avoiding the need for any centralized organization \cite{MAC}.

Furthermore applications of designs in distributed storage systems (DSS) have recently been discussed in several papers under the notion Fractional Repetition Codes (FRC). For more information, see~\cite{Silberstein} and its bibliography. It is conceivable that mosaics will provide novel features to these systems, as they will contribute to parallelisation and thus increased efficiency.

\section*{Conclusion}

In this paper we have introduced the notion of a {\em mosaic\/} to the theory of combinatorial designs. Mosaics may be thought of as tilings of the ambient space of a design with disjoint copies of this design or otherwise with disjoint blocks of designs with different parameters. We were able to generalize our ideas to $q$-analogs of $t$-designs and show the first results in this context.

Beside the rather immediate constructions of infinite families of mosaics derived from resolvable designs, we have a few open problems: How can mosaics be derived from non-resolvable designs? How can the concrete open case after example~1.1 (before the second section of this paper) be tackled?

\printbibliography

\end{document}